\crefname{equation}{equation}{equations}
\crefname{section}{Section}{Sections}
\crefname{chapter}{Chapter}{Chapters}
\newenvironment{manualconj}[2][Conjecture]{\begin{itshape}\begin{trivlist}
\item[\hskip \labelsep {\bfseries #1}\hskip \labelsep {\bfseries #2}]}{\end{trivlist}\end{itshape}}
\newtheorem{lem}{Lemma}[section]
\newtheorem{thm}[lem]{Theorem}
\newtheorem*{thm*}{Theorem}
\newtheorem{prop}[lem]{Proposition}
\newtheorem{conj}[lem]{Conjecture}
\theoremstyle{definition}
\newtheorem{dfn}[lem]{Definition}
\newtheorem{question}[lem]{Question}
\newtheorem{ex}[lem]{Example}
\newtheorem{rmk}[lem]{Remark}
\newcommand{\NN}{\mathbb{N}}
\newcommand{\CC}{\mathbb{C}}
\newcommand{\PP}{\mathbb{P}}
\newcommand{\Spec}{\operatorname{Spec}}
\newcommand{\ord}{\operatorname{ord}}
\newcommand{\Aut}{\operatorname{Aut}}   % Automorphism group
\newcommand{\cha}{\operatorname{char}}
\newcommand{\Sym}{\operatorname{Sym}}
\newcommand{\e}{\'{e}}
\newcommand{\gds}{G_d^{sep}}
\newcommand{\tw}[1]{{\widetilde{#1}}}
\newcommand{\benum}{\begin{enumerate}[(1)]}
\newcommand{\eenum}{\end{enumerate}}
\numberwithin{equation}{section}
\begin{document}

% \title[short text for running head]{full title}
\title{Families and moduli of covers with specified ramification}

\author{Ryan Eberhart}
\address{Department of Mathematics, The Pennsylvania State University, University Park, PA 16802}
%\curraddr{}
\email{rde3@psu.edu}
%\thanks{}

\subjclass[2010]{Primary 14D15; Secondary 14H30, 14H51}
%\subjclass[2010]{Primary 12F10; Secondary 14D05, 14L30}

\date{}

\dedicatory{}

\begin{abstract}
We study branched covers of curves with specified ramification points, under a notion of equivalence derived from linear series. In characteristic 0, no non-constant families of covers with fixed ramification points exist. In positive characteristic we formulate a necessary and sufficient condition for the existence of such a family. We unconditionally prove one direction of this conjecture, and by studying infinitesimal deformations show the other direction in characteristic 2 and 3.
\end{abstract}

\maketitle

%    Text of article.
%%%%%%%%%%%%%%%%%%%
%%%%%%%%%%%%%%%%%%%
\section{Introduction}\label{introsect}
This paper concerns branched covers of curves, under a notion of equivalence derived from considering either linear series on a fixed curve $X$ or finite index subfields of the function field of $X$. 

A \textit{linear series} on $X$ is a linear subspace of the global sections of a line bundle on $X$. Two sections $s_1$ and $s_2$ of a line bundle that have no common zeroes determine a map $X\rightarrow\PP^1_k$, taking $P\in X$ to $(s_1(P):s_2(P))$. Conversely, a map to $\PP^1_k$ yields two sections of a line bundle with no common zeroes, by pulling back generators for the sheaf $\mathcal{O}(1)$. Hence, in studying a linear series $V$ it is natural to investigate planes inside $V$. One can easily see that choosing a different basis for the plane corresponds to post-composing the map determined by $s_1$ and $s_2$ with a fractional linear transformation. Therefore, planes inside a linear series correspond to maps $X\rightarrow\PP^1$ up to post-composition with fractional linear transformations.

With this motivation, we will henceforth consider two branched covering maps $f_i:X \to Y_i$ from a fixed source $X$ \textit{equivalent} if there is a commutative diagram:

$$\xymatrix{
& X\ar[ld]_{f_1}\ar[rd]^{f_2} & \\
Y_1\ar[rr]^\cong & & Y_2
}$$
For an explanation of the correspondence between covers with source $X$ under this equivalence and subfields of $\kappa(X)$, see \cite[Lemma 4.2]{mygal}. This notion of equivalence is distinct from the equivalence used when considering covers of a fixed target curve. A natural question is the following:

\begin{question}\label{quest}
Let $X$ be a smooth proper curve over an algebraically closed field and $S$ a finite set of points on $X$. Under what conditions does there exist a non-constant family of covers with source $X$ of fixed degree and ramification locus $S$?
\end{question}

In \cite{eh}, it is shown that such a family \textit{never} exists when the source is $\PP^1_\CC$. One can show more generally that the same holds for any curve over the complex numbers. In positive characteristic, based upon results concerning covers of a fixed target curve rather than a fixed source, one would expect no such tame families to exist. However, the following example illustrates that this is not the case:

\begin{ex}\label{tameex} (\cite[Example 5.6]{ossp})
Let $k$ be a field of characteristic $p>2$ and consider the family of covers $\PP^1_k\rightarrow \PP^1_k$ given by $y=x^{p+2}+tx^p+x$ with parameter $t\in k$. For every value of $t$, the cover is tamely ramified at $\infty$ and the $(p+1)^{st}$ roots of $-1/2$ and is \e tale elsewhere. When fixing the source $\PP^1_k$, no distinct values of $t$ produce covers which are equivalent up to an automorphism of the target $\PP^1_k$.
\end{ex}

Many other examples of non-constant tame families with fixed ramification indices at fixed points exist, but there is currently no conjectural necessary and sufficient condition for when such families exist. If instead of fixing the ramification indices we fix a related notion, the differential lengths (see \cref{difflensect} for the definition), we have the following conjecture:

\begin{manualconj}{\ref{mainc}.}
Let $k$ be an algebraically closed field of characteristic $p>0$, $d$ a positive integer, $S=\{P_1,...,P_n\}$ a set of points on $\PP^1_k$, and $l_1,...,l_n$ positive integers. Suppose there exists a degree $d$ cover $f:\PP^1_k\rightarrow\PP^1_k$ with ramification locus $S$ such that the differential length of $f$ at each $P_i$ is $l_i$. Then there exists a non-constant family of degree $d$  covers $\PP^1_k\rightarrow\PP^1_k$ with ramification locus $S$ and differential length $l_i$ at each $P_i$ if and only if $l_i\geq p$ for at least one $i$.
\end{manualconj}

\begin{comment}
\begin{manualconj}{\ref{mainc}.}
Let $k$ be an algebraically closed field of characteristic $p>0$, $d$ a positive integer, $P_1,...,P_n$ points on $\PP^1_k$, and $l_1,...,l_n$ positive integers. Suppose there exists a degree $d$ cover $f:\PP^1_k\rightarrow\PP^1_k$ with ramification locus precisely $P_1,...,P_n$ such that the differential length of $f$ at each $P_i$ is $l_i$. Then there exists a non-constant family of degree $d$ covers $\PP^1_k\rightarrow\PP^1_k$ with ramification locus $P_1,..,P_n$ and differential length $l_i$ at each $P_i$ if and only if $l_i\geq p$ for at least one $i$.
\end{manualconj}
\end{comment}

In this article, we make progress towards resolving this conjecture. \cref{halfofmaincprop} proves one half of this conjecture. \cref{threerpsprop} shows the other half holds when there are at most three ramification points, and \cref{mainconjchar3thm} shows it holds in characteristic 2 and 3.

\medskip

\textit{Structure of the paper:} In \cref{difflensect}, we introduce the numerical ramification data we wish to fix, the differential lengths. The main conjecture concerning the existence of a non-constant family with fixed differential lengths is found in \cref{mainconjsect}.  The framework for studying families with fixed differential lengths is provided in \cref{modspsect}. As an application of this framework, the other half of \cref{mainc} is proven in characteristic 2 and 3 in \cref{mainconjchar3sect}, which is the main result of this paper.

\textit{Terminology and Conventions:} A morphism $f:X\rightarrow Y$ of smooth curves over an algebraically closed field is a \textit{branched cover} (sometimes shortened to \textit{cover}) if it is finite and generically \e tale. The set of points of $X$ at which $f$ is not \e tale is the \textit{ramification locus} of $f$. The Grassmannian of $n$-planes in a vector space $V$ will be denoted by $Gr(n,V)$. To avoid awkward phrasing, we always assume that families are over a connected base scheme.

Except for \cref{mainconjchar3sect}, the content of this paper is adapted from a portion of the author's PhD thesis at the University of Pennsylvania, under the direction of David Harbater.

\section{Differential lengths}\label{difflensect}
%Clearly this needs to be set up reasonably
%The results of \cref{nongalprev} all involve fixing the ramification indices.
%A related notion which will behave more nicely in families is what we will call the differential length. 
Let $f:X\rightarrow Y$ be a cover of smooth proper curves over an algebraically closed field. The sheaf $\Omega_{X/Y}$ of relative differentials of $f$ is torsion with support equal to the ramification locus of $f$. For $P\in X$ a closed point, the \textit{differential length} of $f$ at $P$, $l_P$, is the length of $(\Omega_{X/Y})_P$ as an $\mathcal{O}_{X,P}$ module. If $f$ is tamely ramified at $P$ then $l_P$ is equal to the ramification index minus one; if $f$ is wildly ramified at $P$ then $l_P$ is strictly larger than that by \cite[Chapter III Proposition 2.2]{hart}. The \textit{discriminant divisor} of $f$ is the Weil divisor
$Disc(f)=\sum_{P\in X} l_P\cdot P$.

Consider the case where $X$ and $Y$ are projective lines and $f:X\rightarrow Y$ is a degree $d$ cover. Choose a coordinate $x$ on $X$, let $\infty$ denote the unique pole of $x$, and let $U=X\setminus\{\infty\}$. The divisor $Disc(f)$ is principal when restricted to $U$, and is generated by a unique monic polynomial in $x$, which we denote by $disc_x(f)$. When the choice of coordinate is fixed, we may refer without qualification to the \textit{discriminant of $f$} and instead write $disc(f)$.

By the Riemann-Hurwitz formula, we have that
$$\sum_{P\in X} l_P=2d-2.$$
Therefore $l_\infty$ can be determined from the differential lengths of the points in $U$, which is discernible from $disc(f)$. Thus $Disc(f)$ and $disc(f)$ encode equivalent data. After choosing coordinates on both projective lines, we have an effective method for calculating $disc(f)$, which can be verified by calculating the order of vanishing of the pullback of a local generator for the sheaf of differentials:

\begin{prop}\label{discprop}
Let $k$ be an algebraically closed field and $f:\PP^1_k\rightarrow \PP^1_k$ a degree $d$ cover. Choose coordinates $x$ and $y$ on the source and target $\PP^1_k$ respectively. In these coordinates write $f$ as a rational function $y=g(x)/h(x)$ with $g(x)$ and $h(x)$ coprime. The discriminant $disc_x(f)$ is the unique monic polynomial which is a scalar multiple of $h(x)g'(x)-g(x)h'(x)$.
\end{prop}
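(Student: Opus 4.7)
The plan is to use the second exact sequence of differentials,
$$f^*\Omega_{Y/k}\longrightarrow \Omega_{X/k}\longrightarrow \Omega_{X/Y}\longrightarrow 0,$$
which locally expresses $\Omega_{X/Y}$ as the cokernel of multiplication by the derivative of $f$ in suitable coordinates. Since the source is affine on $U:=\PP^1_k\setminus\{\infty\}$ and the target is affine on either of its standard charts, the computation reduces to differentiating the explicit rational expression $y=g(x)/h(x)$. The overarching strategy is to compute $l_P$ at every $P\in U$, identify it with $\ord_P(hg'-gh')$, and then match divisors on $\mathbb{A}^1_k=U$.

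First I note that $hg'-gh'\not\equiv 0$: otherwise $(g/h)'=0$ and $f$ would be constant, contradicting $d\geq 1$. I would then split the points of $U$ into two cases. If $P\in U$ satisfies $f(P)\neq\infty$, then $h(P)\neq 0$, and taking $x$ as a local coordinate on the source and $y$ as a local coordinate on the target, the pullback map sends $dy$ to $d(g/h)=\tfrac{hg'-gh'}{h^{2}}\,dx$. Since $h^{2}$ is a unit in $\mathcal{O}_{X,P}$, the cokernel identifies with $\mathcal{O}_{X,P}/(hg'-gh')$, giving $l_P=\ord_P(hg'-gh')$. If instead $f(P)=\infty$, then $h(P)=0$ and the coprimality of $g$ and $h$ forces $g(P)\neq 0$; switching to the local coordinate $v=1/y=h/g$ on the target near $\infty$, the analogous calculation $f^*(dv)=\tfrac{gh'-hg'}{g^{2}}\,dx$ with $g^{2}$ a unit again yields $l_P=\ord_P(gh'-hg')=\ord_P(hg'-gh')$.

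Combining both cases, $l_P=\ord_P(hg'-gh')$ for every $P\in U$, so
$$Disc(f)|_U=\sum_{P\in U}\ord_P(hg'-gh')\cdot P,$$
which is precisely the divisor of the nonzero polynomial $hg'-gh'$ on $\mathbb{A}^1_k=U$. Since $disc_x(f)$ is by definition the unique monic polynomial whose divisor on $U$ is $Disc(f)|_U$, and any two polynomials on $\mathbb{A}^1_k$ with the same divisor differ by a nonzero scalar, $hg'-gh'$ must be a scalar multiple of $disc_x(f)$. The only place genuine care is needed is in switching local coordinates on the target so that the pullback map is well-defined at the roots of $h$ in $U$; beyond this bookkeeping the computation is routine.
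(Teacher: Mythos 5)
Your proof is correct and takes essentially the same route as the paper's: pull back a local generator of the sheaf of differentials on the target (using the coordinate $y$ or $1/y$ according to whether $f(P)=\infty$), identify $l_P$ with $\ord_P(hg'-gh')$ at each $P\in U$, and compare divisors on $U$. One small repair: in characteristic $p$ the identity $(g/h)'=0$ does not force $f$ to be constant (e.g.\ $y=x^p$), only inseparable; the nonvanishing of $hg'-gh'$ should instead be justified by the fact that a branched cover is by definition generically \'etale, hence separable.
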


\begin{comment}
\begin{proof}
%For notational convenience, set $\alpha(x)=h(x)g'(x)-g(x)h'(x)$.
Choose a point $P=(x-c)$ unequal to infinity on the source $\PP^1_k$. It suffices to show that the order to which $(x-c)$ divides $(h(x)g'(x)-g(x)h'(x))$ is equal to the differential length of $f$ at $P$. Since $dx$ is a generator for $\Omega_{\PP^1,P}$, the pullback of a generator for $\Omega_{\PP^1,f(P)}$ may be written as $\beta\cdot dx$ where $\beta\in\mathcal{O}_{\PP^1,P}$, and the differential length of $f$ at $P$ is equal to the order of vanishing of $\beta$ at $P$.

First, consider the case where $f(P)\neq\infty$. Then $dy$ is a generator for $\Omega_{\PP^1,f(P)}$. Since $y=g(x)/h(x)$, we have that $dy=(h(x)g'(x)-g(x)h'(x))/h(x)^2 dx$. The order of $h(x)$ at $P$ is zero since $f(P)\neq\infty$, which implies that the differential length of $f$ at $P$ is equal to $\ord_P(h(x)g'(x)-g(x)h'(x))$. Therefore the order of the pullback of $dy$ at $P=(x-c)$ is equal to the degree to which $(x-c)$ divides $(h(x)g'(x)-g(x)h'(x))$. %(since $f(P)\neq\infty$, $(x-c)$ does not divide $h(x)$).

Next, consider the remaining case where $f(P)=\infty$. Then $d(1/y)$ is a generator for $\Omega_{\PP^1,f(P)}$. Since $1/y=h(x)/g(x)$, we have that $d(1/y)=(g(x)h'(x)-h(x)g'(x))/g(x)^2 dx$. In this case we have that the order of $g(x)$ at $P$ is zero, implying that the differential length of $f$ at $P$ is equal to $\ord_P(h'(x)-h(x)g'(x))$. Therefore the order of the pullback of $d(1/y)$ at $(x-c)$ is equal to the degree to which $(x-c)$ divides $(h(x)g'(x)-g(x)h'(x))$. 
\end{proof}
\end{comment}

\section{Main conjecture}\label{mainconjsect}
In formulating the definition of a family, the essential point we wish to capture is that in our notion of equivalence the source $X$ is fixed but the target is allowed to vary.

\begin{dfn}\label{famdfn}
Let $k$ be an algebraically closed field, $X$ a smooth proper curve over $k$, and $T$ a $k$-scheme. A \textit{family of degree $d$ covers with source X over $T$} is a $T$-morphism $f:X_T\rightarrow \mathcal{Y}$ where $\mathcal{Y}$ is a flat $T$-scheme and the geometric fibers of $f$ are degree $d$ covers between smooth proper curves. A family is \textit{constant} if the fibers over all $k$-points of $T$ are equivalent.
\end{dfn}

With the differential length viewpoint from \cref{difflensect}, we can state our conjecture concerning the existence of a family:

\begin{conj}\label{mainc}
Let $k$ be an algebraically closed field of characteristic $p>0$, $d$ a positive integer, $S=\{P_1,...,P_n\}$ a set of points on $\PP^1_k$, and $l_1,...,l_n$ positive integers. Suppose there exists a degree $d$ cover $f:\PP^1_k\rightarrow\PP^1_k$ with ramification locus $S$ such that the differential length of $f$ at each $P_i$ is $l_i$. Then there exists a non-constant family of degree $d$  covers $\PP^1_k\rightarrow\PP^1_k$ with ramification locus $S$ and differential length $l_i$ at each $P_i$ if and only if $l_i\geq p$ for at least one $i$.
\end{conj}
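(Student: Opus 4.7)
My plan is to handle the two implications of \cref{mainc} separately, using the moduli framework developed in \cref{modspsect} throughout. The ``if'' direction (construction of a non-constant family when some $l_{P_i} \geq p$) is presumably what \cref{halfofmaincprop} carries out, and the strategy is explicit: given $f$ written in affine coordinates as $y = g(x)/h(x)$, I would search for a first-order deformation $(g + tg_1,\, h + th_1)$ whose discriminant polynomial $h_t g_t' - g_t h_t'$ (see \cref{discprop}) remains a scalar multiple of $\prod_i (x - P_i)^{l_i}$ modulo $t^2$. Matching coefficients produces a linear system in $(g_1, h_1)$. At a tame $P_i$ the system typically has full rank, but at a point with $l_{P_i} \geq p$ it loses rank: formal differentiation with respect to $x$ annihilates coefficients indexed by multiples of $p$, so the vanishing-to-order-$l_{P_i}$ condition imposed at $P_i$ is strictly weaker than its characteristic-zero analogue. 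One then verifies that the resulting nullspace contains deformations that move the $2$-plane $\langle g, h\rangle$ inside $Gr(2, H^0(\PP^1_k, \mathcal{O}(d)))$ rather than merely changing basis within it, which by the discussion in \cref{introsect} is equivalent to producing a non-constant family in the sense of \cref{famdfn}.

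For the ``only if'' direction I would first reduce to tame covers: at any wildly ramified point $P$ one has $p \mid e_P$ and $l_P \geq e_P$, so $l_P \geq p$; hence $l_{P_i} < p$ for every $i$ forces $f$ to be tame. It then suffices to prove rigidity of tame covers of $\PP^1_k$ with fixed source, ramification locus, and ramification indices. The natural route is lifting to characteristic zero: any tame cover of $\PP^1_k$ lifts to a tame cover over $W(k)$ with the same numerical invariants, and any non-constant formal family in characteristic $p$ should, after a suitable completion and base change, lift to a non-constant family in characteristic zero, contradicting the result of \cite{eh} (and its extension to arbitrary source curves mentioned in the introduction). The alternative, carried out in \cref{mainconjchar3thm} in characteristics $2$ and $3$, is to compute the tangent space to the moduli directly inside $Gr(2, H^0(\PP^1_k, \mathcal{O}(d)))$ and show it is zero at every point corresponding to a tame cover.

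The main obstacle is the rigidity direction for large $p$. The direct tangent-space computation appears tractable only for small $p$ because the linear system cut out by the discriminant conditions has a combinatorial structure that degenerates as $p$ grows, which is presumably why the paper stops at $p = 3$. The lifting approach seems more promising uniformly in $p$, but it requires controlling the deformation theory with \emph{prescribed differential lengths} rather than prescribed ramification indices; while these conditions agree pointwise for tame covers, one still needs a precise formalism identifying the deformation functor in characteristic $p$ with that of a chosen lift to characteristic $0$. Establishing such an equivalence of deformation functors, via an obstruction calculation in the relative cotangent complex of $f$, is the key missing step required to promote the char-$2,3$ result to arbitrary $p$.
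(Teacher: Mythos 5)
You are trying to prove a statement that the paper itself leaves as a conjecture: the paper only establishes the ``if'' direction in general (\cref{halfofmaincprop}) and the ``only if'' direction in special cases ($n\leq 3$ in \cref{threerpsprop}, $\cha(k)=2,3$ in \cref{mainconjchar3thm}, plus cited cases $d<p$ and all $l_i$ even and $<p$). Your proposal also does not close the gap --- you concede that the ``equivalence of deformation functors'' needed for the lifting argument is missing --- but beyond that, both halves of your plan have concrete problems. For the ``if'' direction, the paper does not hunt for first-order deformations at all: it writes down the explicit family $f_t=(g(x)+t\,x^p h(x))/h(x)$ after arranging $l_\infty\geq p$, and observes that its discriminant $h g'-g h'$ is literally independent of $t$ because $(x^p h)'=x^p h'$. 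Your tangent-space approach is not only vaguer (the asserted rank loss at a point with $l_i\geq p$ is never verified), it is logically insufficient: exhibiting a nonzero first-order deformation inside $X_D$ does not produce a non-constant family, since the tangent vector may be obstructed or may reflect non-reducedness of the moduli space. Note the asymmetry with the paper's \cref{mainconjchar3thm}, where the \emph{vanishing} of first-order deformations does legitimately give zero-dimensionality and hence rigidity (combined with \cite[Theorem 2.2]{mygal}); the converse use you are making of tangent vectors does not work without an integration step.

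For the ``only if'' direction, the reduction ``$l_i<p$ for all $i$ forces tameness'' is fine, but the next step --- ``it then suffices to prove rigidity of tame covers of $\PP^1_k$ with fixed source, ramification locus, and ramification indices'' via lifting to $W(k)$ --- proves too much and is contradicted by the paper's own \cref{tameex}: $y=x^{p+2}+tx^p+x$ is a non-constant family of \emph{everywhere tamely ramified} covers with fixed ramification locus and fixed (tame) ramification indices. The classical rigidity results you want to import from characteristic zero (or from tame specialization over $W(k)$) concern covers of a \emph{fixed target} with fixed branch locus; the equivalence in this paper fixes the source and lets the target vary, and that is exactly where such arguments break down, as the introduction warns. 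Any correct argument must therefore use the hypothesis $l_i<p$ (equivalently $e_i\leq p$ at every ramification point, which fails at $\infty$ in \cref{tameex}) in an essential way; the paper's characteristic $2,3$ proof does this through the operator $T_f$ and the observation that $(x+a)^p$ cannot divide $disc(f)$, and its $n\leq 3$ case uses finiteness for generic ramification points together with $3$-transitivity of $\Aut(\PP^1_k)$. Without a mechanism of that kind, your lifting strategy cannot be correct as stated.
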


This conjecture implies an affirmative answer to Question 8.4 in \cite{ossp}. Showing one half of \cref{mainc} is not difficult:

\begin{prop}\label{halfofmaincprop}
Let $k$ be an algebraically closed field of characteristic $p>0$, $d$ a positive integer, $S=\{P_1,...,P_n\}$ a set of points on $\PP^1_k$, and $l_1,...,l_n$ positive integers such that $l_i\geq p$ for at least one $i$. Suppose there exists a degree $d$ cover $f:\PP^1_k\rightarrow\PP^1_k$ with ramification locus $S$ such that the differential length of $f$ at each $P_i$ is $l_i$. Then there exists a non-constant family of degree $d$ covers $\PP^1_k\rightarrow\PP^1_k$ with ramification locus $S$ and differential length $l_i$ at each $P_i$.
\end{prop}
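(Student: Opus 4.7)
The plan is to exploit the characteristic-$p$ identity $(x^p)' = 0$, so that adding a scalar multiple of $x^p$ to $f$ preserves its derivative and hence, by \cref{discprop}, its discriminant and its differential lengths. Concretely, I would construct the family $f_t := f + t x^p$ after first choosing coordinates so that a ramification point with differential length at least $p$ sits at $\infty$.

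\textbf{Setup.} Since the existence of a non-constant family with the prescribed discrete data is invariant under automorphisms of source and target, I may assume that the point $P_i$ with $l_i \geq p$ sits at $\infty$ on the source and that $f(\infty) = \infty$ on the target. Writing $f = g/h$ in lowest terms in these coordinates, the ramification index at $\infty$ is $e_\infty = \deg g - \deg h$. The assumption $l_\infty \geq p$ forces $e_\infty \geq p$: tame ramification at $\infty$ gives $l_\infty = e_\infty - 1 \geq p$, so $e_\infty \geq p+1$; wild ramification forces $p \mid e_\infty$, so $e_\infty \geq p$.

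\textbf{The family.} Define $f_t := f + t x^p = (g + t x^p h)/h$ for $t$ in a connected open subset $T \subseteq \mathbb{A}^1_k$ (at most a single value of $t$ must be removed in the edge case $e_\infty = p$, to prevent cancellation of the leading coefficient of the numerator). Since $p + \deg h \leq \deg g$, the numerator has degree $\deg g$, and since $g + t x^p h \equiv g \pmod{h}$, it remains coprime to $h$. Hence $f_t$ has degree $d$ with the same pole order at $\infty$ as $f$. Because $(x^p)' = 0$ in characteristic $p$, we have $f_t' = f'$, and \cref{discprop} then gives $disc(f_t) = disc(f)$ as Weil divisors on the affine source. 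The differential length at $\infty$ is determined by Riemann--Hurwitz, so all differential lengths and the ramification locus $S$ are preserved.

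\textbf{Non-constancy.} Suppose $f_{t_0} = \phi \circ f_{t_1}$ for some $\phi \in \Aut(\PP^1_k)$. Because each $f_t$ sends $\infty$ to $\infty$, $\phi$ must fix $\infty$, so $\phi(y) = ay + b$. Expanding the relation yields $(1-a) f = (a t_1 - t_0) x^p + b$. If $a \neq 1$, then $f$ is of the form $\alpha x^p + \beta$, hence inseparable---contradicting that $f$ is a cover. So $a = 1$, and then $(t_1 - t_0) x^p = -b$ forces $t_0 = t_1$. Thus distinct $k$-points of $T$ give non-equivalent covers, and the family is non-constant. The main point to verify in carrying this out is the degree bookkeeping in the edge case $e_\infty = p$; everything else is a direct application of the $p$-th power trick.
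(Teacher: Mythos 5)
Your proof is correct and is essentially the paper's own argument: choose coordinates putting a point with $l_i\geq p$ at $\infty$ with $f(\infty)=\infty$, and take the family $f_t=f+t\cdot x^p=(g+t x^p h)/h$, whose discriminant $hg'-gh'$ is independent of $t$. Your explicit degree bookkeeping (removing one value of $t$ in the edge case $e_\infty=p$) and the spelled-out non-constancy check via affine automorphisms of the target are details the paper leaves to the reader, but the approach is the same.
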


\begin{proof}
Choose coordinates so that $l_\infty\geq p$ and $\infty$ is a fixed point. In these coordinates represent $f$ by a rational function $g(x)/h(x)$ with $g(x)$ and $h(x)$ coprime. Consider the family of covers
$$f_t(x)=\frac{g(x)}{h(x)}+t\cdot x^p=\frac{g(x)+t\cdot x^ph(x)}{h(x)}$$
with parameter $t\in k$. By our assumptions on $f$, each $f_t$ is written in lowest terms and is a degree $d$ cover. One checks directly that $f_t$ and $f_{t'}$ are equivalent only if $t=t'$. The discriminant of $f_t$ is $h(x)g'(x)-g(x)h'(x)$, independent of $t$. This implies that the given family has the desired properties.
\end{proof}

\begin{rmk}
We do not claim that the ramification indices in the above family remain fixed. It is often the case in producing such a family that the differential lengths are fixed but the ramification indices are not because the covers in the family vary between being wildly and tamely ramified, as the following simple example illustrates. The ability to more easily control the differential lengths in a family is an important reason to consider families with fixed differential lengths instead of fixed ramification indices.
\end{rmk}

\begin{ex}
Let $k$ be an algebraically closed field of characteristic $p>0$. Consider the cover $\PP^1_k\rightarrow\PP^1_k$ given by $y=x^{p+1}$, which has ramification index $p+1$ at 0 and $\infty$. Following the procedure of \cref{halfofmaincprop}, we construct the family of covers $y=x^{p+1}+t\cdot x^p$, which we have shown has fixed differential lengths. However, if $t\neq 0$, the ramification index at 0 is $p$. One can show that there is \textit{no} non-constant family of degree $p+1$ covers $\PP^1_k\rightarrow\PP^1_k$ with fixed ramification indices $e_0,e_\infty=p+1$, and hence \cref{mainc} would be false if we instead required fixed ramification indices.
\end{ex}

As evidence for the other direction of \cref{mainc}, when $d<p$ it holds by \cite[Corollary 3.2]{ossp}. When all the $l_i$ are even and less than $p$ it holds by \cite[Theorem 5.3]{ossm}. After introducing the necessary machinery in \cref{modspsect}, we will show this direction holds when $\cha(k)=2,3$ in \cref{mainconjchar3sect}. The following shows it holds when there are at most three ramification points:

\begin{prop}\label{threerpsprop}
Let $k$ be an algebraically closed field of characteristic $p>0$, $d$ a positive integer, $S=\{P_1,...,P_n\}$ a set of points on $\PP^1_k$, and $l_1,...,l_n$ positive integers. If $n\leq 3$ and $l_i<p$ for every $i$, then there are finitely many equivalence classes of degree $d$ covers $\PP^1_k\rightarrow \PP^1_k$ with ramification locus $S$ and differential length $l_i$ at each $P_i$. Furthermore, this implies that \cref{mainc} holds when $n\leq 3$.
\end{prop}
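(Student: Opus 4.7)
The plan is to reduce the finiteness claim to the classical rigidity of tame three-point covers of $\PP^1$ in positive characteristic, and then deduce \cref{mainc} for $n \le 3$ by translating finiteness of equivalence classes into constancy of families via the moduli framework of \cref{modspsect}. The first observation is that $l_i < p$ forces each $P_i$ to be tamely ramified with $e_i = l_i + 1$, since otherwise the lower bound $l_P \ge p$ for wild ramification cited in \cref{difflensect} would apply.

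For $n = 1$, Riemann--Hurwitz gives $2d - 2 = l_1 \le d - 1$, so $d = 1$. For $n = 2$, Riemann--Hurwitz forces $l_1 = l_2 = d - 1$, i.e., total ramification at both source points; the cover is therefore determined up to a global scalar by its divisor $d P_1 - d P_2$, and after normalizing the two branch points on the target to $\{0, \infty\}$ by target $PGL_2$, the remaining $\mathbb{G}_m$-scaling absorbs the scalar, yielding a single equivalence class.

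For the main case $n = 3$, I would first pin down the size of the branch locus $B = f(S)$. Since the restriction $f \colon \PP^1 \setminus S \to \PP^1 \setminus B$ is a tame \'etale cover, $|B| = 1$ is impossible because the tame fundamental group of $\mathbb{A}^1_k$ is trivial (a tame cover of $\mathbb{A}^1$ extends to one of $\PP^1$ ramified only at $\infty$, excluded by Riemann--Hurwitz), and $|B| = 2$ is impossible because the only connected tame \'etale covers of $\mathbb{G}_m$ are Kummer covers $z \mapsto z^n$, whose source has only two ramified points, contradicting $|S| = 3$. So $|B| = 3$, and I can normalize $(f(P_1), f(P_2), f(P_3)) = (0, 1, \infty)$ by a unique target automorphism. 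The resulting datum is a tame cover of $\PP^1_k \setminus \{0, 1, \infty\}$ of degree $d$ whose monodromy at each puncture is a single $e_i$-cycle together with $d - e_i$ fixed points, plus an identification of the (genus-zero) source with $(\PP^1, P_1, P_2, P_3)$. Grothendieck's specialization theorem for the tame fundamental group yields finitely many such covers up to source isomorphism, and the sharp $3$-transitivity of $PGL_2$ on $\PP^1$ ensures each isomorphism class admits at most one valid source identification; hence there are finitely many equivalence classes.

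For the implication to \cref{mainc}, the finitely many equivalence classes should appear as the $k$-points of a zero-dimensional moduli scheme parametrizing such covers constructed in \cref{modspsect}, so any family over a connected base $T$ corresponds to a morphism $T \to \mathcal{M}$ factoring through a single closed point, forcing the family to be constant; combined with \cref{halfofmaincprop}, this yields \cref{mainc} for $n \le 3$. The main obstacle I anticipate is precisely this last reduction, which requires either invoking the representability aspect of \cref{modspsect} or else reproving the rigidity of tame covers in families (i.e., that tame \'etale covers over a connected base with fixed branch locus are \'etale-locally constant).
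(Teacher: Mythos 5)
Your finiteness argument is correct but follows a genuinely different route from the paper's. The paper deduces tameness and $e_i=l_i+1$ exactly as you do, but then quotes \cite[Theorem 3.3]{ossp} (finitely many equivalence classes for a \emph{generic} choice of ramification points with prescribed indices) and transfers this to the given points $P_1,\dots,P_n$ by pre-composing with the automorphism of the source $\PP^1_k$ carrying one triple to the other, which is where $n\leq 3$ enters. You instead argue directly: after disposing of $n\leq 2$ by Riemann--Hurwitz, you normalize the three branch points (your case analysis showing $|B|=3$ is sound, though the \'etale restriction should be over $\PP^1\setminus f^{-1}(B)$ rather than $\PP^1\setminus S$, and in the $n=1$ case the correct conclusion is that no cover with $l_1>0$ exists at all), and then invoke finite topological generation of the tame fundamental group of $\PP^1\setminus\{0,1,\infty\}$ plus sharp $3$-transitivity of $PGL_2$ to pin the source identification. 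This is self-contained and avoids the generic-position input, at the cost of invoking Grothendieck's specialization machinery; both give the same finiteness statement.

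The genuine gap is the last step, which you yourself flag: passing from ``finitely many equivalence classes'' to ``every family with this ramification data is constant,'' and hence to \cref{mainc} for $n\leq 3$. The paper gets this implication by citing \cite[Theorem 2.2]{mygal}; it is a real theorem, not a formality. Your proposed derivation from \cref{modspsect} does not work as stated: that section constructs the subschemes $X_D$ and $X_{(l_i)}$ of $\gds$ and characterizes their points valued in local Artin rings, but it never shows that a family in the sense of \cref{famdfn} (a $T$-morphism $X_T\to\mathcal{Y}$ with varying target) induces a classifying morphism $T\to\gds$ landing in the relevant locus, nor that such a locus is zero-dimensional in the present situation (indeed the finiteness you proved is a statement about $k$-points, not about the scheme structure). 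Without an algebraic classifying map, finitely many equivalence classes does not by itself rule out a non-constant family over a connected base, since the partition of $T$ by equivalence class is not a priori into open or closed pieces. So to complete the proof you must either prove such a representability/rigidity statement or cite the external result the paper relies on.
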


\begin{proof}
The conditions on the differential lengths imply that such a cover must have ramification index exactly $l_i+1$ at each $P_i$. By \cite[Theorem 3.3]{ossp}, for a generic choice of points $(Q_1,...,Q_n)$ there are finitely many equivalence classes of degree $d$ covers $\PP^1_k\rightarrow \PP^1_k$ with ramification index $l_i+1$ at each $Q_i$. Since $n\leq 3$, there is a bijection between the set of equivalence classes of covers with ramification points $(Q_1,...,Q_n)$ and $(P_1,...,P_n)$, given by pre-composing with an automorphism of $\PP^1_k$ sending one set of ramification points to the other. Hence there are finitely many equivalence classes with ramification points $(P_1,...,P_n)$ as well. By \cite[Theorem 2.2]{mygal}, this implies that there are no non-constant families of such covers.
\begin{comment}
Let $f$ be such a cover. By \cite[Theorem A.6]{ossp} there is a fine moduli space $MR=MR_d(\PP^1_k,\PP^1_k,\{l_i+1\})$ for degree $d$ covers together with a choice of points $Q_1,...,Q_n\in\PP^1_k$ such that the cover has ramification index at least $l_i+1$ at $Q_i$. Note that for $MR$, we are specifying no equivalence relation on the covers or the choice of points. By the Riemann-Hurwitz formula, the conditions on the differential lengths imply that such a cover must have ramification index exactly $l_i+1$ at each $Q_i$.

There is a map $ram:MR\rightarrow (\PP^1_k)^n$ sending the point corresponding to\linebreak[4] $(f,Q_1,...,Q_n)$ to $(Q_1,...,Q_n)$. Furthermore, there is a natural action of $\Aut(\PP^1_k)$ on $MR$, where $\varphi\in\Aut(\PP^1_k)$ acts on the point corresponding to $(f,Q_1,...,Q_n)$ by sending it to $(\varphi\circ f,Q_1,...,Q_n)$. This action is free and $ram$ is invariant under this action.

By \cite[Thereom 3.3]{ossp}, for a generic choice of points $(Q_1,...,Q_n)$ there are finitely many equivalence classes of of degree $d$ covers $\PP^1_k\rightarrow \PP^1_k$ with ramification index $l_i+1$ at each $Q_i$. Since $n\leq 3$, there is a natural bijection between the fibers of $ram$, given by pre-composing with an automorphism of $\PP^1_k$ sending one set of ramification points to the other. Thus for \textit{every} choice of $(Q_1,...,Q_n)$ there are finitely many such equivalence classes. By \cite[Theorem 2.2]{mygal}, this implies that there are no non-constant families of such covers in this case.
\end{comment}
\end{proof}

\section{Moduli space of covers with fixed differential lengths}\label{modspsect}
%needs some sort of intro

Let $X$ be a smooth proper curve over an algebraically closed field $k$. By \cite{ghbrill} there is a moduli space $G^1_d(X)$ of one dimensional linear series on $X$. Consider the case where $X$ is $\PP^1_k$. Since $\mathcal{O}(d)$ is the unique degree $d$ line bundle on $\PP^1_k$, $G^1_d(\PP^1_k)$ is isomorphic to $Gr(2,H^0(\mathcal{O}(d),\PP^1_k))$. Specifying the ramification index of a linear series at a point is equivalent to specifying that the linear series lies on a Schubert variety by \cite{eh}. By \cite[Proposition 2.4]{oss} a separable linear series is only ramified at finitely many points. Therefore the intersection of all these Schubert varieties is precisely the locus of inseparable linear series. The separable linear series then form an open subscheme of $G^1_d(\PP^1_k)$. This subscheme will be denoted by $\gds(\PP^1_k)$, or $\gds$ when the $\PP^1_k$ is understood.

%After choosing a coordinate $x$ in $\PP^1_k$, $H^0(\mathcal{O}(d),\PP^1)$ may be naturally identified with $Poly_k(d)$, the vector space of polynomials of degree at most $d$ in $k[x]$. Let $T\in\gds(\PP^1_k)$ be a plane with basis $\{g(x), h(x)\}$. The \textit{discriminant of the basis $\{g(x),h(x)\}$} is $h(x)g'(x)-g(x)h'(x)$. The \textit{discriminant of $T$}, $disc_x(T)$, which is defined up to $k^\times$-multiplication, is the discriminant of a basis of $T$. The \textit{discriminant divisor of $T$}, $Disc(T)$, is the unique effective divisor of degree $2d-2$ generated away from $\infty$ by $disc_x(T)$.
%The above definition isn't needed in the revised version.

Let $\PP^{2d-2}_k$ be the projectivization of $Poly_k(2d-2)$, the space of polynomials of degree at most $2d-2$ in $k[x]$. The discriminant from \cref{difflensect} defines a morphism $disc:\gds(\PP^1_k)\rightarrow \PP^{2d-2}_k$, since by the Riemann-Hurwitz formula the degree of the discriminant of a degree $d$ cover is at most $2d-2$.% By \cref{discprop}, the discriminant of a base point free linear series $T$ is equal, up to multiplication by $k^\times$, to $disc_x(f)$, where $f:\PP^1_k\rightarrow\PP^1_k$ is a map corresponding to $T$.

\subsection{Fixing differential lengths in $\gds$}
We now examine subschemes of $\gds$ obtained by constraining the differential lengths. We begin with some preliminary definitions. Fix $d\in\NN$ and an effective Weil divisor $D$ on $\PP^1_k$ such that $\deg(D)=2d-2$. Let $\tw{X_D}\subseteq \gds$ be the set of closed points corresponding to linear series with discriminant divisor $D$. Fix positive integers $l_1,...,l_n$ such that $\sum l_i=2d-2$. Let $\tw{X_{(l_i)}}$ be the union of $\tw{X_D}$ over $D$ of the form $D=\sum_{i=1}^n l_i\cdot P_i$, where the $P_i\in\PP^1_k$ are distinct. The points of $\tw{X_{(l_i)}}$ correspond to linear series with fixed differential lengths, but where the ramification points are allowed to vary. We first show that $\tw{X_D}$ and $\tw{X_{(l_i)}}$ are the closed points of reasonable subschemes of $\gds$:

%do we really need to say it's unique if we explicitly construct it?
\begin{lem}\label{toplem}
There exists a unique reduced, closed subscheme of $\gds$, $X_D$, for which the set of closed points of $X_D$ is $\tw{X_D}$. Furthermore, there exists a unique reduced, locally closed subscheme of $\gds$, $X_{(l_i)}$, for which the set of closed points of $X_{(l_i)}$ is $\tw{X_{(l_i)}}$.
\end{lem}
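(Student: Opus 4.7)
The plan is to realize $X_D$ and $X_{(l_i)}$ as the reduced (locally) closed subschemes of $\gds$ obtained from preimages under the discriminant morphism $disc:\gds(\PP^1_k)\to\PP^{2d-2}_k$. I would first identify $\PP^{2d-2}_k$ with the symmetric product $\Sym^{2d-2}(\PP^1_k)$ parametrizing effective degree-$(2d-2)$ divisors on $\PP^1_k$, where a polynomial of degree strictly less than $2d-2$ corresponds to a divisor with multiplicity at $\infty$ equal to $2d-2$ minus the polynomial's degree. Under this identification, $disc$ sends a linear series to its discriminant divisor, so the closed points of $\gds$ lying over a given divisor $E$ are exactly those whose discriminant divisor equals $E$.

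For $X_D$, the divisor $D$ corresponds to a single closed point $[D]\in\PP^{2d-2}_k$, so $\tw{X_D}$ is the set of closed points of the closed subset $disc^{-1}([D])\subseteq\gds$. I would equip this closed subset with the reduced induced subscheme structure to define $X_D$.

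For $X_{(l_i)}$, I would show that the target locus
$$U_{(l_i)}:=\left\{\sum_{i=1}^n l_i Q_i : Q_1,\ldots,Q_n\in\PP^1_k \text{ pairwise distinct}\right\}\subseteq\PP^{2d-2}_k$$
is locally closed. Consider the morphism $\phi:(\PP^1_k)^n\to\PP^{2d-2}_k$ sending $(Q_1,\ldots,Q_n)\mapsto\sum l_i Q_i$. Since $(\PP^1_k)^n$ is proper, $\phi$ is proper, and its image $Z$ is closed. The big diagonal $\Delta\subseteq(\PP^1_k)^n$ (on which some $Q_i=Q_j$) is closed, so $\phi(\Delta)$ is closed as well, and therefore $U_{(l_i)}=Z\setminus\phi(\Delta)$ is open in $Z$ and hence locally closed in $\PP^{2d-2}_k$. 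Then $disc^{-1}(U_{(l_i)})$ is a locally closed subset of $\gds$ whose set of closed points is exactly $\tw{X_{(l_i)}}$, and I would define $X_{(l_i)}$ by giving it the reduced induced subscheme structure.

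Uniqueness in both cases follows from two standard facts: $\gds$ is of finite type over $k$, hence Jacobson, so any (locally) closed subset is determined by its set of closed points; and a reduced (locally) closed subscheme is uniquely determined as a subscheme by its underlying (locally) closed subset. The main obstacle I expect is careful bookkeeping in the identification $\PP^{2d-2}_k\cong\Sym^{2d-2}(\PP^1_k)$ when $\infty$ lies in the support of $D$, so that one of the $l_i$ appears as the drop in the polynomial's degree rather than as a factor $(x-a)^{l_i}$; once this convention is made consistent with the definition of $disc$ used in \cref{discprop}, the closed-point identifications go through and the only genuine geometric input is the properness argument showing $U_{(l_i)}$ is locally closed.
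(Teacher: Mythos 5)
Your proposal is correct and follows essentially the same route as the paper: $X_D$ is the reduced preimage under $disc$ of the point of $\PP^{2d-2}_k$ corresponding to $D$, and $X_{(l_i)}$ is obtained by pulling back the locus of divisors $\sum l_i Q_i$ in $\Sym^{2d-2}(\PP^1_k)\cong\PP^{2d-2}_k$, realized as the (closed) image of $(\PP^1_k)^n$ with the collision locus removed, exactly as in the paper's construction via the maps to the symmetric product. Your uniqueness argument via the Jacobson property is just a cleaner packaging of the paper's density-of-closed-points argument, so the content matches.
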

\begin{proof}
We first prove the claim concerning $X_D$. Choose a coordinate function $x$ on $\PP^1_k$, and let $\alpha(x)$ be the polynomial discriminant corresponding to $D$. Let $X_D$ be the inverse image of $\alpha(x)$ under the map $disc:\gds\rightarrow \PP^{2d-2}_k$. It is clear that $X_D$ is closed, reduced, and the set of its closed points is $\tw{X_D}$, proving the existence portion of the claim.

To establish the uniqueness of $X_D$, suppose $X_D'$ is another subscheme of $\gds$ satisfying the same properties as $X_D$. By \cite[Chapter II Exc.\ 3.11(c)]{hart}, two reduced closed subschemes of $\gds$ which contain the same points must be isomorphic as subschemes, so there exists a (necessarily non-closed) point $P$ which is contained in either $X_D$ or $X_D'$, but not both. Since the closed points of a quasi-compact scheme are dense, $P$ lies in the closure $\tw{X_D}$, the set of closed points of both $X_D$ and $X_D'$. However, $X_D$ and $X_D'$ are both closed, so $P$ lies in both schemes, a contradiction. Hence $X_D$ must be unique.

For the remainder of this proof, use the same coordinate function $x$ on every copy of $\PP^1_k$. We now prove the claim concerning $X_{(l_i)}$. Let $\Sym^r(\PP^1_k)$ denote the $r$-fold symmetric product of $\PP^1_k$. Viewing $\PP^{2d-2}_k$ as the projectivization of $Poly_k(2d-2)$, we have an isomorphism $\varphi: \PP^{2d-2}_k\rightarrow \Sym^{2d-2}(\PP^1_k)$, which sends $f(x)=a\cdot\prod (x-c_i)^{d_i}$ to the point corresponding to $d_i$ copies of $(x-c_i)$ for each $i$ and $2d-2-\sum d_i$ copies of $\infty$.

Fix positive integers $l_1,...,l_n$ such that $\sum l_i=2d-2$. Let $\Delta_r:\PP^1_k\rightarrow \left(\PP^1_k\right)^r$ be the diagonal embedding and $\psi=(\Delta_{l_1},...,\Delta_{l_n}):\left(\PP^1_k\right)^n\rightarrow  \left(\PP^1_k\right)^{2d-2}$. Let $Z$ denote the image of $\left(\PP^1_k\right)^n$ under $\psi$. Since $\PP^1_k$ is separated, the image under each $\Delta_r$ is closed, and thus $Z$ is closed. Let $Z'$ be the $S_{2d-2}$-orbit of $Z$ and $\pi:\left(\PP^1_k\right)^{2d-2}\rightarrow \Sym^{2d-2}(\PP^1_k)$ the quotient map. Since $Z'$ is closed and $S_{2d-2}$-invariant, $\pi(Z')$ is closed. Endow $\pi(Z')$ with the reduced induced scheme structure.

Let $Y_{(l_1,...,l_n)}=disc^{-1}(\varphi^{-1}(\pi(Z')))$. By construction $\tw{X_{(l_i)}}\subseteq Y_{(l_i)}$. However $Y_{(l_1,...,l_n)}$ may additionally contain points corresponding to linear series with discriminant divisors of the form $\sum_{i=1}^n l_i\cdot P_i$, where the $P_i$ are not distinct. Each such point is contained in
$$Y_{(l_1,...,l_i+l_j,...,\widehat{l_j},...,l_n)}$$
for some choice of $i$ and $j$, where $\widehat{l_j}$ denotes that $l_j$ is omitted. Let $X_{(l_i)}$ be $Y_{(l_1,...,l_n)}$ with each such
$Y_{(l_1,...,l_i+l_j,...,\widehat{l_j},...,l_n)}$
removed. Since each removed scheme is closed and there are finitely many choices of indices to combine, $X_{(l_i)}$ is locally closed. After endowing $X_{(l_i)}$ with the induced open subscheme structure, this establishes the existence portion of the claim.

%make 100% sure that it's okay to just use the argument for X_D
For the uniqueness assertion, suppose $X_{(l_i)}'$ is another subscheme of $\gds$ satisfying the same properties as $X_{(l_i)}$. Using the argument for the uniqueness of $X_D$, it must be the case that $X_{(l_i)}$ and $X_{(l_i)}'$ have the same closure, which we denote by $W$ and endow with the reduced induced scheme structure. Both $X_{(l_i)}$ and $X_{(l_i)}'$ are locally closed, so both are open in $W$. Since there is a unique induced subscheme structure on a fixed open set of $W$, $X_{(l_i)}$ and $X_{(l_i)}'$ cannot be set-theoretically equal. Hence there exists a point $P$ which is contained in either $X_{(l_i)}$ or $X_{(l_i)}'$, but not both. Without loss of generality assume $P\in X_{(l_i)}$. Since $W\setminus X_{(l_i)}'$ is closed and contains $P$, no point in the closure of $P$ is contained in $X_{(l_i)}'$. However the closure of $P$ inside $X_{(l_i)}$ is quasi-compact and thus contains a closed point, a contradiction to $X_{(l_i)}$ and $X_{(
l_i)}'$ containing the same closed points. Therefore $X_{(l_i)}$ is unique as claimed.
\end{proof}

\subsection{Deformations over local Artin rings}

We now characterize maps from local Artin rings with target $\gds$ in terms of the discriminant introduced in \cref{difflensect}. First, we require a definition of discriminant for rational functions over local Artin rings. Let $A$ be a local Artin ring, finite over a field $k$. Let $f=g(x)/h(x)$ where $g(x)$ and $h(x)$ are non-zero polynomials with coefficients in $A$. %and $f$ is written in lowest terms?
The \textit{discriminant} of $f$ is $h(x)g'(x)-g(x)h'(x)$.

\begin{prop}\label{formofartinringmapsprop}
Let $k$ be an algebraically closed field and $f:\PP^1_k\rightarrow\PP^1_k$ a degree $d$ cover. Choose coordinate functions on each copy of $\PP^1_k$ such that $f$ is unramified at $\infty$ and $\infty$ is a fixed point. In these coordinates represent $f$ by a rational function $y=g(x)/h(x)$, and if necessary adjust $y$ so that $g(x)$ and $h(x)$ are monic and $g(x)$ has no degree $d-1$ term.

Let $A=k[t_1,...,t_n]/I$ such that $\sqrt{I}=(t_1,...,t_n)$. Choose a basis $\{1,\tau_1,...,\tau_m\}$ of $A$ over $k$ such that each $\tau_i$ is a monomial. We have the following characterization of maps with source $\Spec(A)$:
\begin{enumerate}[(1)]
\item
Giving a map $\Spec(A)\rightarrow X_{Disc(f)}$ such that the unique closed point in $\Spec(A)$ maps to the point corresponding to $f$ is equivalent to choosing polynomials $g_1(x),...,g_m(x),h_1(x),...,h_m(x)\in k[x]$ of degree at most $d-2$ such that the discriminant of
\begin{equation}\label{defoverAeqn}
\frac{g_A(x)}{h_A(x)}=\frac{g(x)+g_1(x)\tau_1+\cdots +g_m(x)\tau_m}{h(x)+h_1(x)\tau_1+\cdots +h_m(x)\tau_m}
\end{equation}
is equal to $disc(f)$ modulo $I$.

\item Let $disc(f)=\prod (x-c_i)^{l_i}$ with the $c_i$ distinct. Giving a map $\Spec(A)\rightarrow X_{(l_i)}$ such that the unique closed point in $\Spec(A)$ maps to the point corresponding to $f$ is equivalent to choosing polynomials $g_1(x),...,g_m(x),h_1(x),...,h_m(x)\in k[x]$ of degree at most $d-2$ such that the discriminant of
\begin{equation*}
\frac{g_A(x)}{h_A(x)}=\frac{g(x)+g_1(x)\tau_1+\cdots +g_m(x)\tau_m}{h(x)+h_1(x)\tau_1+\cdots +h_m(x)\tau_m}
\end{equation*}
is equal to
$$\prod_{i=1}^N (x-c_i+d_i(t_1,...,t_n))^{l_i}$$
modulo $I$, where each $d_i$ contains no constant term.
\eenum
\end{prop}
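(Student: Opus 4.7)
The plan is to set up the standard Grassmannian chart around $[f]$ in $\gds$ and analyze the discriminant morphism restricted to this chart. Because $f$ is separable, unramified at $\infty$, and fixes $\infty$, we must have $\deg g = d$ and $\deg h = d-1$, and the stated normalization makes $(g,h)$ monic with $g$ having no $x^{d-1}$ term. Then $[f]$ lies in the affine open $U \subset Gr(2, H^0(\mathcal{O}(d)))$ parameterizing $2$-planes whose projection onto $\mathrm{span}(x^d, x^{d-1})$ is an isomorphism; each such plane has a unique basis $(G, H)$ with $G$ monic of degree $d$ (no $x^{d-1}$ term) and $H$ monic of degree $d-1$, yielding a canonical isomorphism $U \cong \mathbb{A}^{2(d-1)}$ via the remaining $2(d-1)$ coefficients in degrees $\leq d-2$. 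Consequently, any morphism $\Spec A \to \gds$ sending the closed point to $[f]$ factors through $U$ and is uniquely recorded by data $G_A = g + \sum g_i \tau_i$, $H_A = h + \sum h_i \tau_i$ with $g_i, h_i \in k[x]$ of degree $\leq d-2$.

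For part (1), I would push this through $disc : \gds \to \PP^{2d-2}_k$. By \cref{discprop}, on the chart $U$ the morphism sends $[G,H]$ to the class of $HG' - GH'$. A direct leading-term computation shows $H_A G_A' - G_A H_A'$ is monic of degree $2d-2$ over $A$ (the leading coefficient is $d - (d-1) = 1$ in every characteristic, using that $G_A, H_A$ are monic with $\deg G_A - \deg H_A = 1$), and $disc(f)$ is likewise monic of that degree. Hence the projective identity $[H_A G_A' - G_A H_A'] = [disc(f)]$ in $\PP^{2d-2}(A)$ collapses to the honest affine equality $H_A G_A' - G_A H_A' = disc(f)$ in $A[x]$. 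Combined with the description in \cref{toplem} of $X_{Disc(f)}$ as the fiber of $disc$ over $[disc(f)]$, this gives part (1).

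For part (2), the chart side is identical, but now the composition $disc \circ (\Spec A \to \gds)$ must factor through $\varphi^{-1}(\pi(Z')) \subset \PP^{2d-2}_k$, which is the image of the morphism $\mu: (\PP^1_k)^n \to \PP^{2d-2}_k$, $(Q_1, \ldots, Q_n) \mapsto \prod_i (x - Q_i)^{l_i}$. Because the $c_i$ are distinct, the differentials $\partial\mu / \partial Q_k$ at $(c_1, \ldots, c_n)$ are linearly independent (each $\partial\mu/\partial Q_k$ vanishes to order $l_k-1$ at $c_k$ but to the higher order $l_j$ at $c_j$ for $j\ne k$), so $\mu$ is a closed immersion of a formal neighborhood of $(c_1,\ldots,c_n)$ into $\PP^{2d-2}_k$. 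This lets me lift any $\Spec A \to \varphi^{-1}(\pi(Z'))$ specializing to $[disc(f)]$ uniquely to a $\Spec A \to (\PP^1_k)^n$ specializing to $(c_1, \ldots, c_n)$, necessarily of the form $(c_1 - d_1(t), \ldots, c_n - d_n(t))$ with each $d_i$ having no constant term; translating back via $\mu$ gives the factored form $\prod_i (x - c_i + d_i(t))^{l_i}$ for $P_A$. Since $[f] \in X_{(l_i)}$ and $\Spec A$ is supported at a single point, factoring through the open subscheme $X_{(l_i)} \subseteq Y_{(l_i)}$ is automatic once factorization through $Y_{(l_i)}$ is achieved.

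The main technical hurdle is the unique lifting in part (2): verifying that $\mu$ is close enough to a closed immersion near $(c_1, \ldots, c_n)$ for the Artin-local uniqueness of the lift to hold, and then tracking how the unique lifted tuple $(c_i - d_i(t))$ determines $P_A$. Part (1) and the Grassmannian chart reduction of part (2) are essentially bookkeeping once the correct normalization and leading-term calculation are in hand.
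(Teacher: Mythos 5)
Your chart reduction and your part (1) are essentially the paper's own argument: the same affine chart of normalized bases $\{x^d+\cdots,\,x^{d-1}+\cdots\}$, maps from the Artin local ring recorded by coefficient polynomials $g_i,h_i$ of degree at most $d-2$, and the observation that both $h_Ag_A'-g_Ah_A'$ and $disc(f)$ are monic of degree $2d-2$, so lying over the point $[disc(f)]$ is the literal identity of polynomials. That half is fine.

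Part (2) has a genuine gap. Your key claim, that the differentials $\partial\mu/\partial Q_k$ of $\mu(Q_1,\ldots,Q_n)=\prod_i(x-Q_i)^{l_i}$ are linearly independent at $(c_1,\ldots,c_n)$, fails in characteristic $p$ whenever $p\mid l_k$ for some $k$: one has $\partial\mu/\partial Q_k=-l_k(x-Q_k)^{l_k-1}\prod_{j\neq k}(x-Q_j)^{l_j}$, which vanishes identically in that case, and your order-of-vanishing argument presupposes the vector is nonzero. The proposition carries no hypothesis on the $l_i$ (the range $l_i\geq p$ is exactly the one exploited in \cref{halfofmaincprop}), and the failure is not a repairable technicality: take $\cha(k)=2$, $d=2$, $f=(x^2+a)/(x+b)$ with $a\neq b^2$, so $disc(f)=(x-\sqrt{a})^2$, $n=1$, $l_1=2$. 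Here $X_{(2)}$ is an open subscheme of $\gds$ containing $[f]$, so the deformation $g_A=x^2+a+t\epsilon$, $h_A=x+b$ over $A=k[t]/(t^2)$ defines a map $\Spec(A)\to X_{(2)}$ supported at $[f]$, yet its discriminant $x^2+a+t\epsilon$ is congruent to no $(x-\sqrt{a}+d_1(t))^2\equiv x^2+a$ modulo $(t^2)$ unless $\epsilon=0$; so the unique lifting through $(\PP^1_k)^n$ that you propose cannot exist in this range, and any treatment of (2) there must proceed differently (the paper's own proof stays in the chart and works with the ideal of the closure of $X_{(l_i)}$, never invoking $\mu$). Moreover, even when $p\nmid l_k$ for every $k$, linear independence of the $\partial\mu/\partial Q_k$ only makes $\mu$ unramified at $(c_1,\ldots,c_n)$; to lift an $A$-point of the reduced image you still need the induced map on completed local rings, from the image at $[disc(f)]$ to $k[[Q_1,\ldots,Q_n]]$, to be an isomorphism, which requires accounting for the other points of $\mu^{-1}([disc(f)])$ (permutations preserving the multiplicities) and the unibranchness of the image at that point. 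This is exactly the step you defer as the ``main technical hurdle,'' so as written part (2) is not proved.
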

\begin{proof}
Recall that $\gds$ is an open subscheme of $Gr(2,Poly_k(d))$. The planes which have a basis of the form $\{x^d+\alpha_{d-2}x^{d-2}+\cdots+\alpha_0,x^{d-1}+\beta_{d-2}x^{d-2}+\cdots+\beta_0\}$, where the $\alpha_i$ and $\beta_j$ are arbitrary constants, form an open subscheme $U$, which is isomorphic to $\Spec(R)$ where $R=k[\alpha_0,...,\alpha_{d-2},\beta_0,...,\beta_{d-2}]$. By our choice of coordinates, we have that

$$\frac{g(x)}{h(x)}=\frac{x^d+a_{d-2}x^{d-2}+\cdots+a_0}{x^{d-1}+b_{d-2}x^{d-2}+\cdots + b_0}.$$
Therefore the plane corresponding to $f$ is contained in $U$. Giving a map with source $\Spec(A)$ and target $X_{Disc(f)}$ or $X_{(l_i)}$ such that the unique closed point in $\Spec(A)$ maps to the point corresponding to $f$ is then equivalent to giving such a map with target $X_{Disc(f)}|_U$ or $X_{(l_i)}|_U$ respectively.

We first show (i), and begin with a map $\Spec(A)\rightarrow X_{Disc(f)}|_U$. By \cref{toplem}, $X_{Disc(f)}\subseteq\gds$ is closed. Therefore $X_{Disc(f)}|_U$ is isomorphic to $\Spec(R/K)$ for some ideal $K$. We now describe $K$. Let $$\tw{f}=\frac{\tw{g(x)}}{\tw{h(x)}}=\frac{x^d+\alpha_{d-2}x^{d-2}+\cdots+\alpha_0}{x^{d-1}+\beta_{d-2}x^{d-2}+\cdots+\beta_0}$$
be a cover corresponding to an arbitrary closed point in $U$. This point is contained in $X_{Disc(f)}$ precisely when
\begin{equation}\label{ftwiddleeqn}
\tw{h(x)}\tw{g'(x)}-\tw{g(x)}\tw{h'(x)}=disc(f).
\end{equation}
Since both sides are monic, equating powers of $x$ in \cref{ftwiddleeqn} yields $2d-2$ relations among the $\alpha_i$ and $\beta_j$ which generate the ideal $K$.

Having reduced to a map between affine schemes, we instead consider the corresponding ring homomorphism $\varphi: R/K\rightarrow A$. We must have that $\varphi(\alpha_i-a_i),\varphi(\beta_j-b_j)\in(t_1,...,t_n)$ for every $i$ and $j$ since the unique closed point of $\Spec(A)$ maps to the point corresponding to $f$. This implies that the image under $\varphi$ of each $(\alpha_i-a_i)$ has no constant term, and can therefore be written uniquely modulo $I$ as $\sum c_{i,l}\cdot \tau_l$ where each $c_{i,l}\in k$. Likewise, each $\varphi(\beta_j-b_j)$ can be written uniquely modulo $I$ as $\sum e_{j,l}\cdot \tau_l$ where each $e_{j,l}\in k$.

Write $g_i(x)=c_{d-2,i}x^{d-2}+\cdots+c_{0,i}$ and $h_j(x)=e_{d-2,j}x^{d-2}+\cdots+e_{0,j}$. The condition that $\varphi(K)\subseteq I$ is equivalent to the discriminant of
$$\frac{g(x)+g_1(x)\tau_1+\cdots +g_m(x)\tau_m}{h(x)+h_1(x)\tau_1+\cdots +h_m(x)\tau_m}$$
being equal to $disc(f)$ modulo $I$, as desired. Reversing the argument yields the other direction, proving (i).

%TODO: Actually write out the argument for $X_{(l_i)}$.
The proof of (ii) proceeds similarly, with the caveat that $X_{(l_i)}$ is locally closed instead of closed. However, a map $\Spec(A)\rightarrow X_{(l_i)}$ such that the unique closed point in $\Spec(A)$ maps to the point corresponding to $f$ is equivalent to such a map with target the closure of $X_{(l_i)}\subseteq\gds$. Therefore we may follow the same process of describing the relations in the ideal corresponding to the closure of $X_{(l_i)}$ inside $U$ as was done for the ideal corresponding to $X_D$ inside $U$.
\end{proof}

\begin{rmk}
We refer to a rational function of the form in \cref{defoverAeqn} as a \textit{deformation of $f$ over $A$}.
\end{rmk}

\subsection{A differential operator in positive characteristic}

Let $k$ be an algebraically closed field of characteristic $p>0$ and consider $k(x)$ a vector space over $k(x^p)$ with basis $\{1,x,...,x^{p-1}\}$. Fix a non-zero rational function $f(x)\in k(x)$ and define the operator $T_f:k(x)\rightarrow k(x)$ by the formula $T_f(p(x))=p(x)f'(x)-f(x)p'(x)$. This operator will be utilized in the proof of \cref{mainconjchar3thm}, since $T_f(p(x))$ is the discriminant of the rational function $f(x)/p(x)$ when $f(x)$ and $p(x)$ are coprime polynomials.

Since differentiation is $k(x^p)$-linear, $T_f$ is a $k(x^p)$-linear operator. The $p$-fold composition of $T_f$, $(T_f)^p$, is in fact $k(x)$-linear. By a result in \cite{katz} (which in our situation is stated simply as \cite[Theorem 3.8]{cscurve}), the dimension of the kernel of $T_f$ as a $k(x^p)$-linear operator is equal to dimension of the kernel of $(T_f)^p$ as a $k(x)$-linear operator. Since $(T_f)^p$ operates on a 1 dimensional vector space and $T_f(f(x))=0$, this implies that the kernel of $T_f$ is 1 dimensional and the image of $T_f$ is $(p-1)$ dimensional.

\begin{prop}\label{laliftprop}
For the operator $T_f$ as defined above, if $\cha(k)=2$, then the image of $T_f$ is $k(x^2)\subseteq k(x)$ for every non-zero polynomial $f$. If $\cha(k)>2$, then every $(p-1)$ dimensional $k(x^p)$-vector subspace of $k(x)$ occurs as the image of $T_f$ for some polynomial $f$. Moreover, if $\cha(k)>2$ and the images of $T_f$ and $T_g$ are equal, then $f$ and $g$ are $k(x^p)$-multiples of each other.
\end{prop}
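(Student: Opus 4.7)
The plan is to factor $T_f$ using the quotient rule. Since $f^2(g/f)' = g'f - gf' = -T_f(g)$, one has $T_f = -M_{f^2} \circ \partial \circ M_{1/f}$, where $\partial = d/dx$ and $M_u$ denotes multiplication by $u$. Because $M_{1/f}$ is an automorphism of $k(x)$ as a $k(x^p)$-vector space, this gives $\im T_f = f^2 \cdot \im \partial$. In the basis $\{1, x, \ldots, x^{p-1}\}$ the derivative sends $x^i \mapsto ix^{i-1}$, and since $1, 2, \ldots, p-1$ are nonzero in $k$, $\im \partial = V_0 := \operatorname{span}_{k(x^p)}(1, x, \ldots, x^{p-2})$. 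So $\im T_f = f^2 V_0$ in every positive characteristic. When $\cha(k) = 2$, $V_0 = k(x^2)$, and $f^2 \in k(x^2)$ by Frobenius, so $f^2 V_0 = k(x^2)$ independently of $f$; this settles the first claim.

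Now assume $\cha(k) = p > 2$. To realize every hyperplane as some $f^2 V_0$, I first show every hyperplane is of the form $uV_0$ for $u \in k(x)^*$. Introduce the symmetric $k(x^p)$-bilinear pairing $B(u, v) = \phi_0(uv)$, where $\phi_0$ extracts the coefficient of $x^{p-1}$ in the fixed basis. It is nondegenerate because $B(u, u^{-1}x^{p-1}) = 1$ for any $u \ne 0$. Nondegeneracy makes $B$ induce a $k(x^p)$-linear isomorphism $k(x) \xrightarrow{\sim} \Hom_{k(x^p)}(k(x), k(x^p))$, so every hyperplane is the kernel of $B(u, -)$ for some $u \in k(x)^*$, namely $\{v : uv \in V_0\} = u^{-1} V_0$. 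Replacing $u$ by $u^{-1}$, every hyperplane equals $uV_0$.

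To replace $u$ by $f^2$ with $f$ polynomial I adjust within the coset $u \cdot k(x^p)^*$ (which preserves $uV_0$ since scalars in $k(x^p)^*$ fix every $k(x^p)$-subspace). Writing $u = g/h$ with $g, h \in k[x]$, the element $uh^p = gh^{p-1}$ is a polynomial and $h^p \in k(x^p)^*$, so we may assume $u \in k[x]$. Factoring $u = \prod (x - a_i)^{n_i}$, choose $\epsilon_i \in \{0, 1\}$ with $\epsilon_i \equiv n_i \pmod{2}$; then $c := \prod(x - a_i)^{p \epsilon_i} = \bigl(\prod(x - a_i)^{\epsilon_i}\bigr)^p$ lies in $k[x^p] \subseteq k(x^p)^*$, and $cu = \prod(x - a_i)^{n_i + p \epsilon_i}$ has all exponents even and non-negative because $p$ is odd. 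Hence $cu = f^2$ for a polynomial $f$, and $\im T_f = f^2 V_0 = cuV_0 = uV_0$ is the target hyperplane.

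For uniqueness, a direct expansion shows $\operatorname{Stab}(V_0) \cap k(x)^* = k(x^p)^*$: the conditions that $ux^i$ have zero $x^{p-1}$-coefficient for $0 \le i \le p-2$ force all nonconstant basis coordinates of $u$ to vanish. Thus $\im T_f = \im T_g$ implies $(f/g)^2 \in k(x^p)^*$; combined with $(f/g)^p \in k(x^p)^*$ and $\gcd(2, p) = 1$, B\'ezout yields $f/g \in k(x^p)^*$. The main obstacle is the realization step: finding the right bilinear pairing to hit all hyperplanes, and then using the odd-parity adjustment via $p$-th powers to promote a polynomial representative to a square. The latter fails in characteristic 2, which is consistent with the universally constant image found there.
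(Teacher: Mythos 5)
Your proof is correct, and it takes a genuinely different route from the paper's. The paper relies on two external/geometric inputs: a result of Katz (equality of kernel dimensions of $T_f$ and of the $k(x)$-linear operator $(T_f)^p$) to see that $\operatorname{im}(T_f)$ is $(p-1)$-dimensional, and then, for $p>2$, the observation that the induced map $\varphi:\PP^{p-1}_{k(x^p)}\rightarrow\PP^{p-1}_{k(x^p)}$, $f\mapsto \operatorname{im}(T_f)$, is constant on lines and hence either constant or surjective and quasi-finite; non-constancy is checked on $T_1$ versus $T_x$, surjectivity gives realization of every hyperplane, and quasi-finiteness gives the uniqueness statement. Your argument replaces all of this with the explicit factorization $T_f=-M_{f^2}\circ\partial\circ M_{1/f}$, which immediately yields the closed formula $\operatorname{im}(T_f)=f^2V_0$ with $V_0=\operatorname{span}_{k(x^p)}(1,\dots,x^{p-2})$ (so you recover the dimension count without Katz, and the characteristic $2$ case is instantaneous); the realization step is then handled by the nondegenerate pairing $\phi_0(uv)$ identifying every hyperplane with some $uV_0$, together with the parity trick $u\mapsto u\cdot(\text{$p$-th power})$ to make $u$ a polynomial square (this is exactly where $p>2$ enters), and uniqueness follows from the direct stabilizer computation $\{u:uV_0=V_0\}=k(x^p)^*$ plus B\'ezout on the exponents $2$ and $p$. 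The only cosmetic gaps are that you drop the leading coefficient when factoring $u$ (harmless: constants lie in $k(x^p)^*$, and are squares since $k$ is algebraically closed), and you should say explicitly that $uV_0\subseteq V_0$ forces equality by injectivity of $M_u$ and finite-dimensionality. What your approach buys is an elementary, self-contained, and more explicit description of the image (the coset $f^2V_0$), at the cost of being specific to this operator; the paper's approach is shorter given the cited results and generalizes the surjectivity-or-constancy dichotomy, but yields less explicit information.
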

\begin{proof}
Fix a non-zero polynomial $f(x)\in k[x]$. First, consider the case where $\cha(k)=2$. Write $$f(x)=\sum_{i=0}^n (a_i\cdot x^{2i}+b_i\cdot x^{2i+1}).$$
By direct computation, we have that
$$T_f(1)=\sum_{i=0}^n b_i\cdot x^{2i},\ T_f(x)=\sum_{i=0}^n a_i\cdot x^{2i}.$$
Since $T_f(1)$ and $T_f(x)$ both lie in $k(x^2)$, the image of $T_f$ is indeed $k(x^2)$.

Consider the case where $\cha(k)>2$. Viewing $\mathbb{A}_{k(x^p)}^p$ as %\linebreak[4]
$\Spec(k(x^p)[1,...,x^{p-1}])$ and mapping $p(x)$ to the image of $T_p$, we have a map
$$\mathbb{A}_{k(x^p)}^p\setminus \{0\}\rightarrow Gr(p-1,k(x))\cong \PP^{p-1}_{k(x^p)}.$$
Since this map is constant on lines, it descends to a map $\varphi:\PP^{p-1}_{k(x^p)}\rightarrow \PP^{p-1}_{k(x^p)}$. A map between projective spaces of the same dimension must be either constant or surjective, and moreover in the latter case quasi-finite.

One checks directly in characteristic greater than 2 that $x^{p-1}$ lies in the image of $T_x$ but not $T_1$. This implies that $\varphi$ is non-constant and therefore surjective. That is, for every $(p-1)$ dimensional $k(x^p)$-vector subspace $V$ of $k(x)$, there is some rational function $f(x)$ such that the image of $T_f$ is $V$. Since multiplying $f(x)$ by $g(x)\in k[x^p]$ does not affect the image, for an appropriate choice of $g(x)$ we have that $f(x)g(x)$ is a polynomial such that the image of $T_{fg}$ is $V$, as desired.

Finally, suppose that $\cha(k)>2$ and the image of $T_f$ is equal to the image of $T_g$. If $f$ and $g$ correspond to the same point in $\PP^{p-1}_{k(x^p)}$, then they are $k(x^p)$-multiples of each other. Otherwise by varying $a$ and $b$, $af+bg$ forms a line on which $\varphi$ is constant, which contradicts $\varphi$ being quasi-finite.
\end{proof}

\section{Proof of conjecture in characteristic 2 and 3}\label{mainconjchar3sect}
\begin{thm}\label{mainconjchar3thm}
Let $k$ be an algebraically closed field of characteristic 2 or 3 and $D=\sum l_i\cdot P_i$ a divisor on $\PP^1_k$ such that the $P_i$ are distinct and $0 < l_i < p$ for each $i$. Then $X_D$ is empty or zero dimensional. This implies that \cref{mainc} holds when $\cha(k)=2,3$.
\end{thm}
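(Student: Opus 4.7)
My plan is to prove that the Zariski tangent space of $X_D$ at every closed point is zero; combined with reducedness of $X_D$ from \cref{toplem}, this forces $X_D$ to be a disjoint union of reduced points, so zero-dimensional (or empty). The consequence for \cref{mainc} then follows from \cite[Theorem 2.2]{mygal} exactly as in the proof of \cref{threerpsprop}.

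Fix a closed point $[f] \in X_D$ with $f = g/h$ normalized as in \cref{formofartinringmapsprop}, so $g, h$ are coprime. A tangent vector is a pair $(g_1, h_1)$ of polynomials of degree $\leq d-2$ with $h g_1' + h_1 g' - g h_1' - g_1 h' = 0$, i.e.\ $T_g(h_1) = T_h(g_1)$ in the notation of \cref{laliftprop}. Set $V_g := \im T_g$ and $V_h := \im T_h$. Since $g, h$ coprime cannot be $k(x^p)$-multiples of each other (else $g/h = f$ would be inseparable), \cref{laliftprop} gives $V_g \neq V_h$ when $p = 3$ (while $V_g = V_h = k(x^2)$ when $p = 2$), and a dimension count inside the $p$-dimensional $k(x^p)$-space $k(x)$ shows $V_g \cap V_h$ is $1$-dimensional over $k(x^p)$ in both characteristics. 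Since it contains the nonzero element $disc(f) = T_g(h) = -T_h(g)$, we have $V_g \cap V_h = k(x^p) \cdot disc(f)$. Writing $T_g(h_1) = q \cdot disc(f) = T_g(qh)$ for some $q \in k(x^p)$ and using $\ker T_g = k(x^p) \cdot g$ yields $h_1 = qh + rg$ for some $r \in k(x^p)$; symmetrically, $g_1 = -qg + sh$ for some $s \in k(x^p)$. If $q, r, s$ all lie in $k[x^p]$, the degrees $\deg(qh) \equiv d - 1$ and $\deg(rg) \equiv d \pmod p$ are incongruent, so no cancellation of leading terms occurs in $h_1 = qh + rg$ and the bound $\deg h_1 \leq d - 2$ forces $q = r = 0$; similarly $s = 0$, giving $(g_1, h_1) = (0,0)$.

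The main obstacle is ruling out a finite pole of $q$, $r$, or $s$ under the hypothesis $l_i < p$. Suppose $q$ has a pole of order $pm$ at some $\alpha \in k$ with leading Laurent coefficient $c_{-m} \neq 0$. Because $q \in k(x^p)$, only exponents in $p\ZZ$ appear in its Laurent expansion at $\alpha$; writing $h = \sum_j a_j (x-\alpha)^j$ and $g = \sum_j b_j (x-\alpha)^j$ locally, the first $p$ negative Laurent coefficients of $qh + rg$ at $\alpha$ are $c_{-m} a_j + d_{-m} b_j$ for $j = 0, \ldots, p-1$, where $d_{-m}$ is the analogous coefficient of $r$. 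For $h_1 = qh + rg$ to be a polynomial, all of these must vanish. If $d_{-m} = 0$ then $h$ has a zero of order $\geq p$ at $\alpha$; writing $h = (x-\alpha)^m \tilde h$ and treating $p \mid m$ and $p \nmid m$ separately shows $disc(f) = h g' - g h'$ has a zero of order $\geq p$ there, contradicting $l_i < p$. If $d_{-m} \neq 0$ the vectors $(a_j, b_j)$ for $j = 0, \ldots, p-1$ are pairwise parallel; the coefficient of $(x-\alpha)^n$ for $n < p$ in $disc(f)$ is $\sum_{j + l = n} (l+1)(a_j b_{l+1} - b_j a_{l+1})$, each summand of which vanishes — by parallelism when both $j, l+1 \leq p-1$, and because $l + 1 = p = 0$ in $k$ in the boundary case $n = p - 1$, $j = 0$, $l = p - 1$. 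Again $disc(f)$ has multiplicity $\geq p$ at $\alpha$, contradiction. An analogous pole analysis for $r$ and $s$ (a pole of $r$ at $\alpha$ with $q$ regular there would force a root of $g$ of order $\geq p$ at $\alpha$) rules these out. Hence $q, r, s \in k[x^p]$, the degree argument of the previous paragraph applies, and the tangent space vanishes.
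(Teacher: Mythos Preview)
Your proof follows the same overall strategy as the paper: both compute the Zariski tangent space to $X_D$ at a closed point by solving $T_g(h_1)=T_h(g_1)$, use the one--dimensionality of $\im T_g\cap\im T_h$ (spanned by $disc(f)$) to write $h_1=qh+rg$, $g_1=-qg+sh$ with $q,r,s\in k(x^p)$, exclude finite poles of $q,r,s$ from the hypothesis $l_i<p$, and finish with a degree argument. The only real difference is the pole--exclusion step. The paper first shows that $q,r,s$ share a common denominator $\sigma\in k[x^p]$, then applies $T_g$ to the polynomial $\sigma g_1$ to obtain $(x+a)^p\mid \alpha_1\cdot disc(f)$ and hence $(x+a)^p\mid disc(f)$. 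You instead expand in Laurent series at a putative pole and compute the first $p$ Taylor coefficients of $disc(f)$ directly from the parallelism of the vectors $(a_j,b_j)$. These are two faces of the same computation; your version is more explicit, the paper's is slicker because the $k(x^p)$--linearity of $T_g$ packages the coefficient identities for free.

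Two small points to tighten. First, your formula $c_{-m}a_j+d_{-m}b_j$ for the bottom $p$ Laurent coefficients of $qh+rg$ silently assumes that the pole of $r$ at $\alpha$ has order at most $pm$; if $r$ has a strictly deeper pole you must instead read off the bottom $p$ coefficients from $rg$ alone, which immediately forces $(x-\alpha)^p\mid g$ and the same contradiction---so the case split should be by ``which of $q,r$ has the deeper pole'' rather than ``$q$ has a pole / $q$ is regular''. Second, in characteristic $2$ the paper disposes of the theorem in one line by observing that a differential length equal to $1$ cannot occur, so $X_D=\varnothing$ unless $D=0$; your uniform deformation argument is then vacuously correct but unnecessarily elaborate for that case.
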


\begin{proof}
First suppose that $\cha(k)=2$. No cover in characteristic 2 can have a differential length of 1 by \cite[Chapter III Proposition 2.2]{hart}. Therefore $X_D$ is empty if $D\neq 0$, or is a single point corresponding to the unique equivalence class of degree 1 covers if $D=0$.

Suppose that $\cha(k)=3$. Let $P \in X_D$ be a closed point and $f$ a degree $d$ cover in the equivalence class corresponding to $P$. We will show $X_D$ is zero dimensional at $P$ by showing that there are no non-trivial first order deformations of $f$. By \cref{formofartinringmapsprop}, after choosing appropriate coordinates so that $f=g(x)/h(x)$ is unramified at $\infty$ and $\infty$ is a fixed point, giving a first order deformation $\Spec(k[t]/(t^2))\rightarrow X_D$ mapping to $P$ is equivalent to choosing $g_1,h_1\in k[x]$ of degree at most $d-2$ such that the discriminant of
\begin{equation*}
f_t=\frac{g(x)+t\cdot g_1(x)}{h(x)+t\cdot h_1(x)}
\end{equation*}
is equal to $disc(f)$ modulo $(t^2)$.

We first deduce the form of $g_1$ and $h_1$. By \cref{laliftprop}, the $k(x^3)$-linear operators $T_g,T_h:k(x)\rightarrow k(x)$ have distinct 2 dimensional images, and hence the intersection of their images is 1 dimensional. Moreover, $g(x)$ and $h(x)$ span the kernels of $T_g$ and $T_h$ respectively. Since $T_g(h)=-T_h(g)$, we have that $T_g(p(x))=T_h(q(x))$ if and only if $p(x)=\beta h(x)$ and $q(x)=-\beta g(x)$ for some $\beta\in k(x^3)$. Therefore since the discriminant of $f_t$ has no $t$ term, it must be the case that $g_1(x)=\alpha h(x)+\beta g(x)$ and $h_1(x)=\gamma g(x)-\beta h(x)$, where $\alpha,\beta,\gamma\in k(x^3)$.

Let $\beta=\beta_1/\sigma$, where $\beta_1,\sigma\in k[x^3]$ are coprime. Suppose that $(x+a)^{3k}$ exactly divides $\sigma$ and let $v$ be the valuation on $k(x)$ associated to $(x+a)$. Since $0< l_i< 3$, it follows that $0 \leq v(g),v(h)<3$.  Since $g_1=\alpha h(x)+\beta g(x)$ is a polynomial, so $v(g_1)\geq 0$. However since $v(\beta g(x))< v(g_1)$, this implies that $v(\alpha h(x))=v(\beta g(x))$. Hence $(x+a)^{3k}$ divides the denominator of $\alpha$. Interchanging the roles of $\alpha,\beta,$ and $\gamma$ and applying the same argument to $h_1$, we deduce that the denominators of $\alpha,\beta,$ and $\gamma$ must be equal when written in lowest terms. Therefore, we may write $\alpha=\alpha_1/\sigma$ and $\gamma=\gamma_1/\sigma$, where $\alpha_1,\gamma_1\in k[x^3]$.

Since the degree of $g(x)$ and $h(x)$ differ by one and $\alpha_1,\beta_1,\gamma_1\in k[x^3]$, there can be no cancellation of highest order terms in $g_1=\alpha h(x)+\beta g(x)$ or $h_1=\gamma g(x)-\beta h(x)$. Therefore since the degree of $g_1$ and $h_1$ can be at most $d-2$, either $\alpha,\beta,\gamma=0$ or the degree of $\sigma$ is positive. To arrive at a contradiction, suppose that the former does not hold, and let $(x+a)^3$ be a divisor of $\sigma$. Since $g_1$ is a polynomial, we must have that $(x+a)^3$ divides $\alpha_1 h(x)+\beta_1 g(x)$ as well. The $k(x^3)$-linearity of $T_g$ then implies that $(x+a)^3$ divides $T_g(\alpha_1 h(x)+\beta_1 g(x))$, which is equal to $\alpha_1(h(x)g'(x)-g(x)h'(x))=\alpha_1\cdot disc(f)$. However $\alpha_1$ and $\sigma$ are coprime, so $(x+a)^3$ divides $disc(f)$. This is a contradiction to each $l_i$ being less than 3. Thus it must be the case that $\alpha,\beta,\gamma=0$, so that $P\in X_D$ has no non-trivial first order deformations. This implies that $X_D$ is zero dimensional at $P$. Since $X_D\subseteq \gds$ is closed, it then consists of finitely many closed points. By \cite[Theorem 2.2]{mygal}, this implies that no non-constant families of such covers exist.
\end{proof}

\begin{rmk}
The primary obstruction to carrying out the same argument in higher characteristic is that the intersection of the images of $T_g$ and $T_h$ is $(p-2)$ dimensional. Writing down the general form of $g_1$ and $h_1$ becomes difficult in this case, since the intersection of the images cannot be easily described in terms of $g$ and $h$.
\end{rmk}

\bibliography{mybib}{}
\bibliographystyle{amsalpha}
\end{document}